\theoremstyle{plain}
\newtheorem{Theorem}{Theorem}[section]
\newtheorem*{mainthm*}{Main Theorem}
\newtheorem{Lemma}[Theorem]{Lemma}
\theoremstyle{definition} 
\newtheorem{Definition}[Theorem]{Definition}
\newtheorem{Example}[Theorem]{Example}
\def\Z{{\mathbb{Z}}}
\def\Q{{\mathbb{Q}}} 
\def\ch{{\mathrm{char}}} 
\title{On finite generating sets of infinitely generated ideals}
\author{Takafumi Shibuta\\Kyushu Sangyo University}
\address{Department of Information Science, Kyushu Sangyo University}
\date{}
\thanks{This work has been supported by JSPS Grant-in-Aid for Scientific Research (C) (No. 22K03334).}
\email{tshibuta@ip.kyusan-u.ac.jp}
\begin{document}
\subjclass[2020]{Primary 13E15; Secondary 13P10.}
\maketitle
\begin{abstract}
This paper presents a novel approach to constructing 
finite generating sets for infinitely generated ideals. 
By integrating algebraic and computational techniques, 
we provide a method to identify finite generators, 
demonstrated through illustrative examples.
\end{abstract}
\maketitle
\section{Introduction}
The Hilbert basis theorem \cite{Hilbert} asserts that any polynomial ideal is finitely generated. Nonetheless, determining a finite generating set for an ideal derived from infinitely many polynomials poses significant theoretical and computational challenges. 
In their work \cite{BE}, Buchberger and El\'ias demonstrate that the ideal generated by the Fermat polynomials $f_n=x^n+y^n-z^n$ for $n \ge 3$ is, in fact, generated by $f_3$, $f_4$, and $f_5$. 
In this paper, we  discuss more general situations. 
For example, take the ideal of the polynomial ring $\Q[x,y,z,w]$ generated by infinitely many polynomials 
\begin{align*}
f_n=(2n^6+3)xy^{n^3+2}+(n^3+1)z^{n^3+2}+w^{n^3+2},
n \in \Z_{\ge 0}. 
\end{align*} 
Later, we will show that this ideal is finitely generated by  $f_0,f_1,f_2,$ and $f_3$ (Example \ref{Ex cubic2}). 

We first consider ideals generated by polynomials 
$f_{n_1,\dots,n_p}$ where the parameters $n_1,\dots,n_p$ appear in the coefficients and exponents in polynomial forms, 
and range over the non-negative integers. 
However, this general form of the problem is not solvable even in the case of a polynomial ring with a single variable: 
Let $\varphi(n_1,\dots,n_p)\in \Z[n_1,\dots,n_p]$ be a polynomial, 
and consider the following ideal of $\Q[x]$ 
\begin{align*}I=\langle x^{\varphi(n_1,\dots,n_p)^2}\mid n_1,\dots,n_p \in \Z_{\ge 0} \rangle\subset \Q[x]. \end{align*}
Then $I$ is generated by $x^M$ where $M=\min \{\varphi(n_1,\dots,n_p)^2\mid n_1,\dots,n_p \in \Z_{\ge 0} \}$. 
In particular, $I=\Q[x]$ if and only if the Diophantine equation $\varphi(n_1,\dots,n_p)=0$ has a non-negative integer solution. 
Therefore, determining a generating set of $I$ depends on the solvability of the Diophantine equation $\varphi(n_1, \dots, n_p) = 0$. According to the MRDP (Matiyasevich-Robinson-Davis-Putnam) theorem \cite{Matiyasevich}, no algorithm exists to decide this in general.

Therefore, constraints must be imposed on the exponents to facilitate the derivation of finite generating sets.
Beginning with the simplest case where the exponents are linear, we provide explicit methods to identify finite generating sets. 

\begin{mainthm*}[Theorem \ref{main}]
Fix positive integers $p,r$, and $N$. 
Let $R$ be a commutative ring, and $x_1,\dots,x_r \in R$. 
Let 
\begin{align*}
c_k(n_1,\dots,n_p)\in R[n_1,\dots,n_p], 1\le k \le N,
\end{align*}
be polynomials in the parameters $n_1,\dots,n_p$, 
and let $\alpha^{(k)}_{ij},\beta^{(k)}_{i}\in \Z_{\ge 0}$ be non-negative integers for $1\le i \le r, 1\le j \le p ,1\le k \le N$. 
Let 
\begin{align*}
f_{n_1,\dots,n_p}=\sum_{k=0}^N c_k(n_1,\dots,n_p) \prod_{i=1}^r x_i^{\sum_{j=1}^p \alpha^{(k)}_{ij} n_j+\beta^{(k)}_{i}},
\end{align*}
and consider the ideal $I=\langle f_{n_1,\dots,n_p} \mid {n_1,\dots,n_p}\in \Z_{\ge 0} \rangle \subset R$. 
We set 
\begin{align*}
\ell_i=\sum_{k=1}^N(\deg_{n_i} c_k+1)-1.
\end{align*}
Then $I$ is generated by $\{ f_{n_1,\dots,n_p} \mid \forall i,  0\le n_i \le \ell_i\}$. 
\end{mainthm*}
In the last section, we will show how to apply the above theorem 
along with the Gr\"obner basis theory in the case of higher degree exponents. 
\section{Main theorem}
Let $R$ be a commutative ring (not necessarily Noetherian). 
For $\bm{x}=(x_1,\dots, x_r) \in R^r$ and an exponent vector 
$\bm{\alpha}=(\alpha_1,\dots,\alpha_r)\in \Z_{\ge 0}^r$, 
we use the multi-index notation $\bm{x^\alpha}=x_1^{\alpha_1}\cdots x_r^{\alpha_r}$, and $\bm{\alpha!}=(\alpha_1 !)\times \dots \times (\alpha_r !)$. For example, 
let $R=\Q[x,y]$, $\bm{z}=(2,x,x+y)\in R^3$, 
$\bm{\alpha}=(n+2,n,2n+1)$, and $\bm{\beta}=(n,0,0)$, 
then $\bm{z}^{\bm{\alpha}}+\bm{z}^{\bm{\beta}}=2^{n+2}x^n(x+y)^{2n+1}+2^n$. 

We define the product ordering $\le$ on $\Z_{\ge 0}^r$ as 
\begin{align*}
\bm{\alpha}=(\alpha_1,\dots,\alpha_r) \le \bm{\beta}=(\beta_1,\dots,\beta_r)
\Leftrightarrow 1\le \forall i \le r,\alpha_i\le \beta_i. 
\end{align*}
For example, $(2,7,4)\le (3,7,6)$, and $(1,3,1)$ and $(4,1,4)$ are not comparable. 

$R\llbracket \bm{t}\rrbracket=R\llbracket t_1,\dots,t_r\rrbracket$ denotes the formal power series ring over $R$. 
For a infinite collection $\{ f_{\bm{n}}\in R \mid \bm{n}\in \Z_{\ge 0}^r\}$, we consider its generating function $\sum_{\bm{n}\in \Z_{\ge 0}^r} f_{\bm{n}} \bm{t}^{\bm{n}}$.
Note that $\psi(\bm{t}) \in R\llbracket \bm{t} \rrbracket$ is invertible if and only if $\psi(\bm{0})$ is invertible in $R$. 

The next lemma is the key to prove the main theorem..
\begin{Lemma}\label{key}
Fix a vector $\bm{\beta}_0=(\beta_1,\dots,\beta_r) \in \Z_{\ge 0}^r$. 
Let $f_{\bm{n}}\in R$ for $\bm{n}\in \Z_{\ge 0}^r$, 
and $g_{\bm{\alpha}}\in R$ for $\bm{\alpha}\in \Z_{\ge 0}^r, \bm{\alpha} \le \bm{\beta}_0$. 
If there exists an invertible element $\psi(\bm{t}) \in R\llbracket \bm{t} \rrbracket$ such that
\begin{align*}
\sum_{\bm{n}\in \Z_{\ge 0}^r} f_{\bm{n}} \bm{t}^{\bm{n}}
=\psi(\bm{t}) \sum_{\bm{\alpha}\in \Z_{\ge 0}^r, \bm{\alpha} \le \bm{\beta}_0} g_{\bm{\alpha}} \bm{t}^{\bm{\alpha}}, 
\end{align*}
then the ideal 
$\langle f_{\bm{n}}\in R\mid \bm{n}\in \Z_{\ge 0}^r\rangle\subset R$ 
is generated by the finite set 
\begin{align*}\{ f_{\bm{n}} \mid \bm{n}\in \Z_{\ge 0}^r, \bm{n} \le \bm{\beta}_0\}.
\end{align*} 
\end{Lemma}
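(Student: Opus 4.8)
The plan is to extract coefficients on both sides of the generating-function identity and read off what each $f_{\bm{n}}$ looks like in terms of the $g_{\bm{\alpha}}$. Write $\psi(\bm{t}) = \sum_{\bm{m}\in\Z_{\ge 0}^r} \psi_{\bm{m}}\bm{t}^{\bm{m}}$, so that comparing coefficients of $\bm{t}^{\bm{n}}$ gives
\[
f_{\bm{n}} = \sum_{\bm{\alpha}\le\bm{\beta}_0,\ \bm{\alpha}\le\bm{n}} \psi_{\bm{n}-\bm{\alpha}}\, g_{\bm{\alpha}}.
\]
This already shows $f_{\bm{n}} \in \langle g_{\bm{\alpha}} \mid \bm{\alpha}\le\bm{\beta}_0\rangle$ for every $\bm{n}$, so $\langle f_{\bm{n}} \mid \bm{n}\rangle \subseteq \langle g_{\bm{\alpha}} \mid \bm{\alpha}\le\bm{\beta}_0\rangle$. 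The reverse containment is the point: I need each $g_{\bm{\alpha}}$ (with $\bm{\alpha}\le\bm{\beta}_0$) to lie in $\langle f_{\bm{n}} \mid \bm{n}\le\bm{\beta}_0\rangle$, after which both ideals coincide and equal $\langle f_{\bm{n}}\mid \bm{n}\le\bm{\beta}_0\rangle$, which is what we want.

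To get the reverse direction, invert $\psi$. Since $\psi(\bm{0})$ is invertible in $R$, $\psi(\bm{t})^{-1} \in R\llbracket\bm{t}\rrbracket$ exists; write it as $\sum_{\bm{m}} \varphi_{\bm{m}}\bm{t}^{\bm{m}}$. Then
\[
\sum_{\bm{\alpha}\le\bm{\beta}_0} g_{\bm{\alpha}}\bm{t}^{\bm{\alpha}} = \psi(\bm{t})^{-1}\sum_{\bm{n}\in\Z_{\ge 0}^r} f_{\bm{n}}\bm{t}^{\bm{n}}.
\]
Comparing coefficients of $\bm{t}^{\bm{\alpha}}$ for a fixed $\bm{\alpha}\le\bm{\beta}_0$ gives
\[
g_{\bm{\alpha}} = \sum_{\bm{n}\le\bm{\alpha}} \varphi_{\bm{\alpha}-\bm{n}}\, f_{\bm{n}},
\]
and every index $\bm{n}$ occurring here satisfies $\bm{n}\le\bm{\alpha}\le\bm{\beta}_0$. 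Hence $g_{\bm{\alpha}} \in \langle f_{\bm{n}} \mid \bm{n}\le\bm{\beta}_0\rangle$ for all such $\bm{\alpha}$. Combining the two inclusions, $\langle f_{\bm{n}}\mid\bm{n}\in\Z_{\ge 0}^r\rangle = \langle g_{\bm{\alpha}}\mid\bm{\alpha}\le\bm{\beta}_0\rangle = \langle f_{\bm{n}}\mid\bm{n}\le\bm{\beta}_0\rangle$, completing the proof.

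The only subtlety worth care is the finiteness of each coefficient sum: because the right-hand side of the hypothesis is a \emph{polynomial} in $\bm{t}$ with support inside the box $\{\bm{\alpha}\le\bm{\beta}_0\}$, the coefficient of any $\bm{t}^{\bm{n}}$ in $\psi(\bm{t})\cdot(\text{that polynomial})$ is a finite $R$-linear combination of the $g_{\bm{\alpha}}$, and likewise with $\psi^{-1}$; there is no convergence issue since we work in the formal power series ring. I do not expect a genuine obstacle here — the argument is essentially "multiply by the unit and by its inverse, then compare coefficients." The one place to be slightly careful is to make sure the constraint $\bm{n}\le\bm{\beta}_0$ really is forced in the expression for $g_{\bm{\alpha}}$: it is, because $\bm{n}\le\bm{\alpha}$ there and $\bm{\alpha}\le\bm{\beta}_0$ by assumption, and $\le$ is transitive on $\Z_{\ge 0}^r$.
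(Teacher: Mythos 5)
Your proof is correct and follows essentially the same route as the paper's: compare coefficients in the given identity to get $\langle f_{\bm{n}}\rangle\subseteq\langle g_{\bm{\alpha}}\mid\bm{\alpha}\le\bm{\beta}_0\rangle$, then multiply by $\psi^{-1}$ and compare coefficients of $\bm{t}^{\bm{\alpha}}$ for $\bm{\alpha}\le\bm{\beta}_0$ to get the reverse inclusion into $\langle f_{\bm{n}}\mid\bm{n}\le\bm{\beta}_0\rangle$. The explicit coefficient formulas you wrote out are just a more detailed rendering of the same argument.
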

\begin{proof}
By expanding the right-hand side and comparing the coefficients on both sides of the equation, we conclude that 
$f_{\bm{n}}\in \langle g_{\bm{\alpha}} \mid \bm{\alpha}\in \Z_{\ge 0}^r, \bm{\alpha} \le \bm{\beta}_0\rangle$
for all $\bm{n}\in \Z_{\ge 0}^r$. 
Thus 
$\langle f_{\bm{n}}\mid \bm{n}\in \Z_{\ge 0}^r\rangle
\subset 
\langle g_{\bm{\alpha}} \mid \bm{\alpha}\in \Z_{\ge 0}^r, \bm{\alpha} \le \bm{\beta}_0\rangle$. 

Since $\psi$ has the inverse element $\psi(\bm{t})^{-1} \in R\llbracket \bm{t}\rrbracket$, we have 
\begin{align*}
\psi(\bm{t})^{-1} \sum_{\bm{n}\in \Z_{\ge 0}^r} f_{\bm{n}} \bm{t}^{\bm{n}}
=\sum_{\bm{\alpha}\in \Z_{\ge 0}^r, \bm{\alpha} \le \bm{\beta}_0} g_{\bm{\alpha}} \bm{t}^{\bm{\alpha}}. 
\end{align*}
By expanding the left-hand side and comparing the coefficients of $\bm{t}^{\bm{\alpha}}$ with $\bm{\alpha}\le \bm{\beta}_0$ on both sides of the equation, we conclude that 
$g_{\bm{\alpha}}\in \langle f_{\bm{n}} \mid \bm{n}\in \Z_{\ge 0}^r, \bm{n} \le \bm{\beta}_0\rangle$
for all $\bm{\alpha}\le \bm{\beta}_0$. 
Thus, we have 
$\langle g_{\bm{\alpha}} \mid \bm{\alpha}\in \Z_{\ge 0}^r, \bm{\alpha} \le \bm{\beta}_0\rangle
\subset 
\langle f_{\bm{n}}\mid \bm{n}\in \Z_{\ge 0}^r, \bm{n} \le \bm{\beta}_0\rangle$. 

Summing up the above, 
\begin{align*}\langle f_{\bm{n}}\mid \bm{n}\in \Z_{\ge 0}^r\rangle
\subset 
\langle g_{\bm{\alpha}} \mid \bm{\alpha}\in \Z_{\ge 0}^r, \bm{\alpha} \le \bm{\beta}_0\rangle
\subset 
\langle f_{\bm{n}}\mid \bm{n}\in \Z_{\ge 0}^r, \bm{n} \le \bm{\beta}_0\rangle.
\end{align*}
This proves the assertion. 
\end{proof}
\begin{Example}
Let $f_n=xy^{2n+1}+zw^{2n+1} \in \Q[x,y,z,w]$ 
for $n\in \Z_{\ge 0}$. 
Then 
\begin{align*}
\sum_{n=0}^\infty f_n t^n
&=xy\sum_{n=0}^\infty (y^2t)^n+zw\sum_{n=0}^\infty (w^2t)^n
=\frac{xy}{1-y^2t}+\frac{zw}{1-w^2t}\\
&=\frac{(xy+zw)-(xw+yz)ywt}{(1-y^2t)(1-w^2t)}. 
\end{align*}
Since $\frac{1}{(1-y^2t)(1-w^2t)}$ is invertible, 
$\langle f_n \mid n \in \Z_{\ge 0} \rangle$ is generated by $f_0, f_1$ by Lemma \ref{key}. 
This ideal appears as the defining ideal for the set of true parameters in a specific probabilistic model \cite{Watanabe}.
\end{Example}
\begin{Definition}
For a variable $w$ and $k\in \Z_{\ge 0}$, we define 
\begin{align*}
[w]_k= w(w - 1) \dots (w - k + 1).
\end{align*}
\end{Definition} 
It is well-known that the generating function of $[n]_k,~n\in \Z_{\ge 0},$ is a rational function. 
\begin{Theorem}\label{generating function}
Let $k \in \Z_{\ge 0}$. 
For any commutative ring $R$,  
\begin{align*}
\sum_{n=0}^\infty [n]_k t^n =\dfrac{k! t^k}{(1-t)^{k+1}}
\end{align*}
holds in $R\llbracket t\rrbracket$. 
\end{Theorem}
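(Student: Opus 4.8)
The plan is to reduce to the universal case $R=\Z$ and then identify $[n]_k$ with a binomial coefficient.

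\emph{Reduction.} The constant term of $(1-t)^{k+1}$ equals $1$, so this element is a unit in $R\llbracket t\rrbracket$ for every commutative ring $R$; hence the claimed identity is equivalent to
\[
(1-t)^{k+1}\sum_{n=0}^\infty [n]_k\, t^n \;=\; k!\, t^k .
\]
All coefficients occurring here are integers, and the canonical homomorphism $\Z \to R$ induces a ring homomorphism $\Z\llbracket t\rrbracket \to R\llbracket t\rrbracket$ carrying the integer version of this identity to its $R$-version. So it suffices to prove the identity in $\Z\llbracket t\rrbracket$; applying the homomorphism and then dividing through by the unit $(1-t)^{k+1}$ recovers the statement over $R$.

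\emph{The case $R=\Z$.} Since $[n]_k = k!\binom{n}{k}$ for every $n\in\Z_{\ge 0}$ (both sides vanish when $n<k$), it suffices to show
\[
G_k(t) \;:=\; \sum_{n=0}^\infty \binom{n}{k}\, t^n \;=\; \frac{t^k}{(1-t)^{k+1}}
\]
in $\Z\llbracket t\rrbracket$. I argue by induction on $k$. For $k=0$ this is the geometric series $\sum_{n\ge 0} t^n = (1-t)^{-1}$. For $k\ge 1$ the $n=0$ term of $G_k$ vanishes, so Pascal's rule $\binom{n}{k} = \binom{n-1}{k} + \binom{n-1}{k-1}$ (valid for $n\ge 1$) gives
\[
G_k(t) \;=\; \sum_{n\ge 1}\binom{n-1}{k}\, t^n + \sum_{n\ge 1}\binom{n-1}{k-1}\, t^n \;=\; t\,G_k(t) + t\,G_{k-1}(t),
\]
whence $(1-t)\,G_k(t) = t\,G_{k-1}(t)$. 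The induction hypothesis then yields $G_k(t) = \frac{t}{1-t}\cdot\frac{t^{k-1}}{(1-t)^{k}} = \frac{t^k}{(1-t)^{k+1}}$, closing the induction.

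There is no genuine obstacle in this argument; the only points requiring a little care are the index bookkeeping at $n=0$ in the inductive step and the observation that a unit of $R\llbracket t\rrbracket$ may be cleared from an identity and then restored. As an alternative that avoids the reduction to $\Z$, one can induct directly in $R\llbracket t\rrbracket$ by applying the formal operator $t\,\frac{d}{dt}$ to both sides of the $k$-th identity and using the integer identity $n[n]_k = [n]_{k+1} + k[n]_k$ to pass from $k$ to $k+1$; the resulting computation is of the same size.
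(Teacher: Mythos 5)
Your proof is correct. It shares the paper's first move—reducing to the universal case $R=\Z$ via the coefficientwise homomorphism $\Z\llbracket t\rrbracket \to R\llbracket t\rrbracket$ (you spell this reduction out, together with the harmless clearing of the unit $(1-t)^{k+1}$, where the paper simply asserts ``it suffices to prove the case $R=\Z$'')—but the core computation is genuinely different. The paper obtains the identity in one stroke by applying the operator $t^k\frac{d^k}{dt^k}$ to the geometric series $\sum_{n\ge 0}t^n=\frac{1}{1-t}$, which implicitly uses the validity of termwise formal differentiation and the formula for the $k$-th derivative of $(1-t)^{-1}$; you instead rewrite $[n]_k=k!\binom{n}{k}$ and establish $\sum_{n\ge 0}\binom{n}{k}t^n=\frac{t^k}{(1-t)^{k+1}}$ by induction on $k$ using Pascal's rule, avoiding formal derivatives altogether. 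Your route is slightly longer but entirely elementary and makes every step (including the base-change argument, which silently also covers the positive-characteristic degeneration the paper remarks on afterwards) explicit; the paper's differentiation argument is more compact, and your suggested alternative with $t\frac{d}{dt}$ and $n[n]_k=[n]_{k+1}+k[n]_k$ is essentially an inductive version of it.
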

\begin{proof}
It suffices to prove the case where $R=\Z$. 
By applying $t^k\dfrac{d^k}{dt^k}$ on the both sides of 
\begin{align*}
\sum_{n=0}^\infty t^n =\dfrac{1}{1-t}, 
\end{align*}
we conclude the assertion. 
\end{proof}
The characteristic of $R$, denoted by $\ch R$, is the non-negative generator of the kernel of the unique ring homomorphism $\Z\to R$. 
When $\ch R > 0$ and $k \ge \ch R$, it holds that $k!=0$ and $[n]_k =0$ for all $n \in \Z_{\ge 0}$ in $R$. 
Thus, in this case, $\sum_{n=0}^\infty [n]_k t^n =\frac{k! t^k}{(1-t)^{k+1}}=0$. 

We are now ready to prove the main theorem of this paper. 
\begin{Theorem}\label{main}
Fix positive integers $p,r$, and $N$. 
Let $R$ be a commutative ring, and $\bm{x}=(x_1,\dots,x_r )\in R^r$. 
Let $c_k(\bm{n})\in R[\bm{n}]= R[n_1,\dots,n_p]$  be polynomials in the parameters $\bm{n}=(n_1,\dots,n_p)$, 
and $\bm{A}^{(k)}=(\alpha^{(k)}_{ij})\in \Z_{\ge 0}^{r\times p} ,\bm{\beta}^{(k)}=(\beta^{(k)}_{i})\in \Z^r_{\ge 0}$ for $1\le k \le N$. 
Let 
\begin{align*}
f_{\bm{n}}=\sum_{k=0}^N c_k(\bm{n}) \bm{x}^{\bm{A}^{(k)}\bm{n}+\bm{\beta}^{(k)}},~~
\ell_j=\sum_{k=1}^N(\deg_{n_j} c_k+1)-1,
\end{align*}
and $\bm{\ell}=(\ell_1,\dots,\ell_p)$ where $\deg_{n_j} c_k$ denotes the degree of $c_k$ in $n_j$. 

Then the ideal $\langle f_{\bm{n}} \mid \bm{n}\in \Z^r_{\ge 0} \rangle \subset R$ is generated by 
$\{ f_{\bm{n}} \mid \bm{n}\le \bm{\ell}\}$. 
\end{Theorem}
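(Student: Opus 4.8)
The plan is to reduce everything to Lemma \ref{key}: I would compute the generating function $F(\bm{t}):=\sum_{\bm{n}\in\Z_{\ge 0}^{p}}f_{\bm{n}}\,\bm{t}^{\bm{n}}\in R\llbracket t_1,\dots,t_p\rrbracket$ in closed form and exhibit it as an invertible power series times a polynomial whose degree in each $t_j$ is at most $\ell_j$; Lemma \ref{key} (applied with the role of $r$ there played by $p$, and $\bm{\beta}_0=\bm{\ell}$) then yields the claim at once.

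First I would peel the parameter $\bm{n}$ out of the exponents. Writing $u^{(k)}_j:=\prod_{i=1}^{r}x_i^{\alpha^{(k)}_{ij}}\in R$, one has $\bm{x}^{\bm{A}^{(k)}\bm{n}}=\prod_{j=1}^{p}\bigl(u^{(k)}_j\bigr)^{n_j}$, so interchanging the (finite) sum over $k$ with the formal sum over $\bm{n}$ gives
\begin{align*}
F(\bm{t})=\sum_{k=1}^{N}\bm{x}^{\bm{\beta}^{(k)}}\sum_{\bm{n}\in\Z_{\ge 0}^{p}}c_k(\bm{n})\prod_{j=1}^{p}\bigl(u^{(k)}_j t_j\bigr)^{n_j}.
\end{align*}
Next I would expand each coefficient in the falling-factorial basis: since the transition matrix between $\{1,n,\dots,n^{d}\}$ and $\{[n]_0,\dots,[n]_d\}$ is unitriangular over $\Z$ (Stirling numbers), it is invertible over any $R$, so $c_k(\bm{n})=\sum_{\bm{m}}a^{(k)}_{\bm{m}}\prod_{j=1}^{p}[n_j]_{m_j}$ with $a^{(k)}_{\bm{m}}\in R$ and $\bm{m}$ ranging over $0\le m_j\le d^{(k)}_j:=\deg_{n_j}c_k$. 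Substituting, the inner sum factors over $j$, and I would evaluate each factor by pushing Theorem \ref{generating function} through the substitution homomorphism $R\llbracket t\rrbracket\to R\llbracket\bm{t}\rrbracket$, $t\mapsto u^{(k)}_j t_j$ (legitimate because $u^{(k)}_j t_j$ has zero constant term), obtaining
\begin{align*}
\sum_{n_j\ge 0}[n_j]_{m_j}\bigl(u^{(k)}_j t_j\bigr)^{n_j}=\frac{m_j!\,\bigl(u^{(k)}_j t_j\bigr)^{m_j}}{\bigl(1-u^{(k)}_j t_j\bigr)^{m_j+1}}.
\end{align*}

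Collecting, for each fixed $k$, over the denominator $D_k(\bm{t}):=\prod_{j=1}^{p}(1-u^{(k)}_j t_j)^{d^{(k)}_j+1}$ shows that $F(\bm{t})=\sum_{k}\bm{x}^{\bm{\beta}^{(k)}}P_k(\bm{t})\,D_k(\bm{t})^{-1}$ with $P_k\in R[\bm{t}]$ and $\deg_{t_j}P_k\le d^{(k)}_j$ for each $j$ (the $\bm{m}$-term contributes $a^{(k)}_{\bm{m}}\prod_j m_j!\,(u^{(k)}_j t_j)^{m_j}(1-u^{(k)}_j t_j)^{d^{(k)}_j-m_j}$, of degree $d^{(k)}_j$ in $t_j$). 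Putting everything over $D(\bm{t}):=\prod_{k=1}^{N}D_k(\bm{t})$, which is invertible in $R\llbracket\bm{t}\rrbracket$ since $D(\bm{0})=1$, gives $F(\bm{t})=D(\bm{t})^{-1}Q(\bm{t})$ with $Q(\bm{t})=\sum_{k}\bm{x}^{\bm{\beta}^{(k)}}P_k(\bm{t})\prod_{k'\ne k}D_{k'}(\bm{t})\in R[\bm{t}]$. The degree count is the decisive step: in the variable $t_j$ the $k$-th summand of $Q$ has degree at most $d^{(k)}_j+\sum_{k'\ne k}(d^{(k')}_j+1)=\sum_{k'=1}^{N}(d^{(k')}_j+1)-1=\ell_j$, so $Q(\bm{t})=\sum_{\bm{\alpha}\le\bm{\ell}}g_{\bm{\alpha}}\bm{t}^{\bm{\alpha}}$ for suitable $g_{\bm{\alpha}}\in R$. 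Applying Lemma \ref{key} with $\psi=D^{-1}$ then finishes the proof. (Indices $k$ with $c_k=0$ are simply discarded; and if $\ch R>0$ and some $m_j\ge\ch R$, then $m_j!=0$ and the corresponding contribution already vanishes, so nothing is lost.)

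The step I expect to be the main obstacle is precisely this final degree bookkeeping — checking that passing to the single denominator $D$ produces a numerator of degree at most $\ell_j$ in each $t_j$, so that the exponent $\ell_j=\sum_{k}(\deg_{n_j}c_k+1)-1$ stated in the theorem is exactly the sharp bound delivered by the argument. The remaining ingredients (interchanging the formal summations, the unitriangular change of basis to falling factorials, and the substitution into Theorem \ref{generating function}) are routine once one commits to carrying out the computation inside $R\llbracket t_1,\dots,t_p\rrbracket$.
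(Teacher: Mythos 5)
Your proposal is correct and follows essentially the same route as the paper's own proof: expand $c_k$ in the falling-factorial basis, evaluate the generating function via Theorem \ref{generating function} with the substitution $t\mapsto \bm{x}^{\bm{\alpha}^{(k)}_j}t_j$, put everything over the common denominator $\prod_{k,j}(1-\bm{x}^{\bm{\alpha}^{(k)}_j}t_j)^{\deg_{n_j}c_k+1}$, check $\deg_{t_j}$ of the numerator is at most $\ell_j$, and invoke Lemma \ref{key}. Your version is if anything slightly more careful than the paper's (explicit unitriangular Stirling change of basis and the per-summand degree count $d^{(k)}_j+\sum_{k'\ne k}(d^{(k')}_j+1)=\ell_j$), so no gaps to report.
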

\begin{proof}
Let $\bm{\alpha}_j^{(k)}=(\alpha^{(k)}_{1j}, \dots, \alpha^{(k)}_{rj})$. 
Then 
\begin{align*}
\bm{x}^{\bm{A}^{(k)}\bm{n}+\bm{\beta}^{(k)}}
=\bm{x}^{\bm{\beta}^{(k)}}
\prod_{j=1}^p \left(\bm{x}^{\bm{\alpha}_j^{(k)}}\right)^{n_j}.
\end{align*}
For each $k$, the coefficient polynomial $c_k(\bm{n})$ can be written as 
\begin{align*}
c_k(\bm{n}) = c_k(n_1,\dots,n_p)
=\sum_{\substack{\bm{m}=({m}_1,\dots, {m}_p)\in \Z_{\ge 0}^p \\
 \forall j, 0 \leq {m}_j \leq \deg_{n_j}c_k}}
 \gamma_{\bm{m}}
 \prod_{j=1}^p [n_j]_{{m}_j}
\end{align*}
for some $\gamma_{\bm{m}}\in R$. 
By Lemma \ref{key}
\begin{align*}
\sum_{\bm{n}\in \Z_{\ge 0}^p}
\prod_{j=1}^p [n_j]_{{m}_j} \bm{x}^{\bm{A}^{(k)}\bm{n}+\bm{\beta}^{(k)}}\bm{t}^{\bm{n}}
&=\bm{x}^{\bm{\beta}^{(k)}}
\prod_{j=1}^p 
\left\{
\sum_{n_j=0}^\infty
[n_j]_{{m}_j}\left(\bm{x}^{\bm{\alpha}_j^{(k)}}t_j\right)^{n_j}
\right\}\\
&=\frac{\bm{m!}\bm{x}^{\bm{\beta}^{(k)}}\bm{t}^{\bm{m}}}
{\prod_{j=1}^p \left(1-\bm{x}^{\bm{\alpha}_j^{(k)}}t_j\right)^{{m}_j+1}}
\end{align*}
Hence  
\begin{align*}
\sum_{\bm{n}\in \Z_{\ge 0}^p} f_{\bm{n}}\bm{t}^{\bm{n}}
&=
\sum_{k=1}^N \sum_{\bm{n}\in \Z_{\ge 0}^p} c_k(\bm{n}) \bm{x}^{\bm{A}^{(k)}\bm{n}+\bm{\beta}^{(k)}} \bm{t}^{\bm{n}}\\
&=\sum_{k=1}^N \sum_{\substack{\bm{m}=({m}_1,\dots, {m}_p)\in \Z_{\ge 0}^p \\
 \forall j, 0 \leq {m}_j \leq \deg_{n_j}c_k}}
 \frac{\bm{m!}\gamma_{\bm{m}}\bm{x}^{\bm{\beta}^{(k)}}\bm{t}^{\bm{m}}}
{\prod_{j=1}^p \left(1-\bm{x}^{\bm{\alpha}_j^{(k)}}t_j\right)^{{m}_j+1}}.
\end{align*}
By putting all terms over a common denominator, we have  
\begin{align*}
\sum_{\bm{n}\in \Z_{\ge 0}^p} f_{\bm{n}}\bm{t}^{\bm{n}}=\frac{g(\bm{t})}
{\prod_{k=1}^N\prod_{j=1}^p 
\left(1-\bm{x}^{\bm{\alpha}_j^{(k)}}t_j\right)^{\deg_{n_j}c_k+1}},
\end{align*}
where the numerator $g(\bm{t})$ satisfies 
\begin{align*}
\deg_{t_j} g(\bm{t})< \deg_{t_j} \prod_{k=1}^N\prod_{j=1}^p 
\left(1-\bm{x}^{\bm{\alpha}_j^{(k)}}t_j\right)^{\deg_{n_j}c_k+1}
=\sum_{j=1}^p \left(\deg_{n_j}c_k+1 \right).
\end{align*}
This shows that any term $\bm{t}^{\bm{m}}$ appearing in $g(\bm{t})$ with non-zero coefficient satisfies $\bm{m}\le \bm{\ell}$. 
Since the denominator $ \prod_{k=1}^N\prod_{j=1}^p 
\left(1-\bm{x}^{\bm{\alpha}_j^{(k)}}t_j\right)^{\deg_{n_j}c_k+1}$ 
is invertible in $R\llbracket \bm{t}\rrbracket$, 
the ideal $\langle f_{\bm{n}} \mid \bm{n}\in \Z^r_{\ge 0} \rangle \subset R$ is generated by 
$\{ f_{\bm{n}} \mid \bm{n}\le \bm{\ell}\}$ by Lemma \ref{key}. 
\end{proof}
Note that when $\ch R >0$, we can reduce the situation to the case where $\deg_{n_j} c_k < \ch R$ for all $j$ and $k$.
\begin{Example}
Let $R=\Q[x,y,z]$ and
\begin{align*}
f_n&=x^{2n+1}+y^{2n+1}+z^{n+2}+2^{n+2}x^2 
\end{align*}
for $n\in \Z_{\ge 0}$. 
Since degree of the coefficients $1,1,1,1$ are all $0$, 
we have $\ell=4-1=3$. 
By Theorem \ref{main}, 
$\langle f_n \mid n\in \Z_{\ge 0}\rangle$ 
is generated by $f_0,\dots,f_3$. 
\end{Example}
\begin{Example}
Let $R=\Q[x_1,x_2,x_3,y_1,y_2,y_3]$ and 
\begin{align*}
f_n=(n+1)x_1y_1^{2n+1}+x_2y_2^{n+3}+(n^2+1)x_3y_3^{n+2}
\end{align*}
for $n\in \Z_{\ge 0}$. 
Since the degree of the coefficients $n+1,1,n^2+1$ are 
$1,0,2$ respectively, we have $\ell=(1+1)+(0+1)+(2+1)-1=5$. 
Thus $\langle f_n \mid n\in \Z_{\ge 0}\rangle$ 
is generated by $f_0,\dots,f_5$  by Theorem \ref{main}. 
\end{Example}
\begin{Example}
Let $R=\Q[x,y,z]$ and 
\begin{align*}
f_{n,m}=(n^2-m)x^{n+m+2}+(nm^4+m^2)x^{2n+m+1}z^{m+n+3}+(n^5+n^2)y^{4n+m+2} \in R.
\end{align*}
Then the degree of the coefficients 
$n^2-m, nm^4+m^2, n^5+n^2$
in $n$ (resp. $m$) are 
$2,1,5$ (resp. $1,4,0$). 
Thus 
\begin{align*}
\ell_1&=(2+1)+(1+1)+(5+1)-1=10,\\
\ell_2&=(1+1)+(4+1)+(0+1)-1=7,
\end{align*}
and $\langle f_{n,m}\mid n,m \in \Z_{\ge 0} \rangle$ is generated by $\{f_{n,m}\mid 0\le n \le 10, 0\le m\le 7\}$ by Theorem \ref{main}. 
\end{Example}
We remark that a generating set obtained by Theorem \ref{main} is not always minimal. 
\begin{Example}
Let $f_n=x^{2n+1}+(n^{100}+1)y^{n+1}$ for $n\in \Z_{\ge 0}$. 
The generating set of $\langle f_n \mid n\in \Z_{\ge 0}\rangle$ obtained by Theorem \ref{main} is $f_0, \dots,f_{101}$. 
On the other hand,  $\langle f_0,f_1,f_2 \rangle=\langle x+y,y^2\rangle$, and furthermore, it can be easily shown that $f_n \in \langle x+y,y^2\rangle$ for all $n\in \Z_{\ge 0}$. 
\end{Example}
\section{The case of higher degree exponents}
In this section, we consider the case where the exponents are polynomials in parameters of degree $\ge 2$. 
Since $\sum_{n=0}^\infty t^{n^d}$ is not a rational function for $d\ge 2$, 
we cannot apply the method used in the proof of Theorem \ref{main} in this cases. 

We will propose an approach to higher degree exponents case using Gr\"obner basis (\cite{Buchberger}\cite{CLO}) and Theorem \ref{main}. 
By using Gr\"obner bases and the division algorithm, the ideal membership problem for finitely generated ideals can be solved (see \cite{CLO} Section 2.8). 
The idea is as follows: 
Let $\bm{n}=(n_1,\dots,n_{p})$ and $\bm{m}=(m_1,\dots,m_{q})$ be parameter vectors, 
and let $\varphi_j(\bm{m})\in \Z[\bm{m}]$ for $1\le j \le q$. 
Let 
\begin{gather*}
g_{\bm{m}}=\sum_{k=0}^N c_k(m_1,\dots,m_{q}) \prod_{i=1}^r x_i^{\sum_{j=1}^{q} \alpha^{(k)}_{ij} m_j+\beta^{(k)}_{i}}\in R[x_1,\dots,x_r], \\
f_{\bm{n}}=g_{(\varphi_1(\bm{n}),\dots,\varphi_q(\bm{n})) }=\sum_{k=0}^N c_k(\varphi_1(\bm{n}),\dots,\varphi_q(\bm{n})) \prod_{i=1}^r x_i^{\sum_{j=1}^q \alpha^{(k)}_{ij} \varphi_j(\bm{n})+\beta^{(k)}_{i}}, 
\end{gather*}
and $I=\langle f_{\bm{n}} \mid \bm{n}\in \Z_{\ge 0}^p \rangle$. 
We consider a finite generating set of $I$. 
Fix a sutable $\bm{k}_0 \in \Z_{\ge 0}^r$ and let $J=\langle f_{\bm{n}} \mid \bm{n}\le \bm{k}_0 \rangle$.
Then take a sufficient large $\bm{k}_1  \in \Z_{\ge 0}^r$ with $\bm{k}_0 < \bm{k}_1$, 
and compute the list 
\begin{align*}
L=\{\bm{m} \in \Z_{\ge 0}^q \mid \bm{m}\le \bm{k}_1,~ g_{\bm{m}} \in J\} 
\end{align*}
by using Gr\"obner bases. 
It sometimes happens that the list $L$ contain $\bm{m}$'s which are not written as $\bm{m}=(\varphi_1(\bm{n}),\dots,\varphi_q(\bm{n}))$ for any $\bm{n}\in \Z_{\ge 0}^p$. 
If we are fortunate, we can conclude that $ f_{\bm{n}}\in J$ for all $\bm{n}$ using Theorem \ref{main}, which proves $I=J$. 
\begin{Example}
Let $R=\Q[x,y,z,w]$, and 
\begin{align*}
g_{n} &=xy^{n+1}+z^{n+1}+w^{n+1},\\
f_{n}=g_{n^3} &=xy^{n^3+1}+z^{n^3+1}+w^{n^3+1}.
\end{align*}
Let $I_1=\langle f_{n}\mid n\in\Z_{\ge 0} \rangle\subset R$ and 
$J_1=\langle f_0,f_1,f_2,f_3\rangle=\langle g_0,g_1,g_8,g_{27}\rangle$.  
Using Gr\"obner basis, we can determine that $g_{28}, g_{29} \in J_1$ 
even though $28, 29$ are not cubic numbers. 

On the other hand, by Theorem \ref{main}, $\langle g_{n+27} \mid n\in\Z_{\ge 0} \rangle$ is generated by $g_{27},g_{28}, g_{29}$. 
Hence $g_n \in J_1$ for all $n\ge 27$. In particular, $f_n=g_{n^3}\in J_1$ for all $n\ge 3$. 
In conclusion, we have $I_1=J_1$.  
\end{Example}
\begin{Example}\label{Ex cubic2}
Let $R=\Q[x,y,z,w]$, and 
\begin{align*}
g_n &=(2n^2+3)xy^{n+2}+(n+1)z^{n+2}+w^{n+2},\\
f_n=g_{n^3} &=(2n^6+3)xy^{n^3+2}+(n^3+1)z^{n^3+2}+w^{n^3+2}.
\end{align*}
Let $I_2=\langle f_{n}\mid n\in\Z_{\ge 0} \rangle\subset R$ and 
$J_2=\langle f_0,f_1,f_2,f_3\rangle=\langle g_0,g_1,g_8,g_{27}\rangle$.  
By using Gr\"obner basis, we can determine that $g_{n} \in J_2$ 
for $n=27,28,29,30,31,32$. 
By Theorem \ref{main}, $\langle g_{n+27} \mid n\in\Z_{\ge 0} \rangle$ is generated by $ g_{n+27}$ for $0\le n \le 5$. 
Hence $g_n \in J_2$ for all $n\ge 27$. In particular, $f_n=g_{n^3}\in J_2$ for all $n\ge 3$. 
In conclusion, we have $I_2=J_2$. 
\end{Example}
\begin{Example}
Let $R=\Q[x,y,z,w]$, and 
\begin{align*}
f_{n} &=xy^{n+1}+z^{n+1}+w^{n+1},\\
g_{n}=f_{n^2} &=xy^{n^2+1}+z^{n^2+1}+w^{n^2+1}.
\end{align*}
$I_3=\langle f_{n}\mid n\in\Z_{\ge 0} \rangle\subset R$ and 
$J_3=\langle f_0,f_1,f_2,f_3\rangle=\langle g_0,g_1,g_2,g_9\rangle$.  
Figure \ref{fig:ex_sq} shows the values of $n$ that satisfy $g_{n} \in J_3$, 
computed using Gr\"obenr basis. 
By Theorem \ref{main}, $g_9,g_{12},g_{15} \in J_3$ and $g_{10},g_{13},g_{16} \in J_3$ implies $g_{3n+9}, g_{3n+10}\in J_3$ for all $n\in \Z_{\ge 0}$. 
Hence $g_n \in J_3$ for all $n\ge 9$ with $n\not\equiv 2 \bmod 3$. 
Since $n^2 \not\equiv 2 \bmod 3$ for all $n$, we conclude that $f_n \in J_3$ for all $n\ge 3$. In conclusion, we have $I_3=J_3$. 
\begin{figure}[hbpt]
    \centering
\includegraphics[scale=0.6]{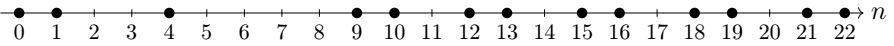}
    \caption{Plot of $n$ satisfying $g_{n}\in J_3$.}
    \label{fig:ex_sq}
\end{figure}
\end{Example}
\begin{Example}
Let $R=\Q[x,y,z,u,v]$, and 
\begin{align*}
g_{n_1,n_2} &=x^{2n_1+3}+y^{2n_2+3}+u^{n_1+2}v^{n_2+2}\\
f_{n,m}=f_{n^2+m^2,m^3} &=x^{2n^2+2m^2+3}+y^{2m^3+3}+u^{n^2+m^2+2}v^{m^3+2}.
\end{align*}
We will find a finite generating set of the ideal $I_4=\langle f_{n,m}\mid n,m\in\Z_{\ge 0} \rangle\subset R$. 
Let $J_4=\langle f_{n,m} \mid 0\le n \le 3, 0\le m \le 2\rangle$. 
Figure \ref{fig:ex_nm0} shows the pairs $(n_1,n_2)$ that satisfy $g_{n_1,n_2} \in J_4$. 
Since $g_{n_1+1,n_2}\in J_4$ for $0\le n_1, n_2\le2$, we have 
$g_{n_1+1,n_2}\in J_4$ for all $n_1,n_2\in \Z_{\ge 0}$ by Theorem \ref{main}. 
Since $n^2+m^2\ge 1$ for $(n,m)\neq (0,0)$, 
it follows that $f_{n,m}\in J_4$ for all $(n,m)\in \Z_{\ge 0}$. 
In conclusion, we have $I_4=J_4$. 
\begin{figure}[hbpt]
    \centering
\includegraphics[scale=0.6]{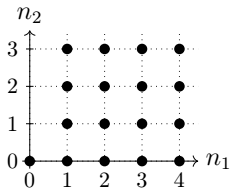}
    \caption{Plot of $(n_1,n_2)$ satisfying $g_{n_1,n_2}\in J_4$.}
    \label{fig:ex_nm0}
\end{figure}
\end{Example}
\begin{Example}
Let $R=\Q[x,y,z,u,v]$, and 
\begin{align*}
g_{n_1,n_2} &=x^{2n_1+3}+y^{2n_2+3}+u^{n_1+2}v^{n_2+2}\\
f_{n,m}=g_{n^2+m^2,n+m^3} &=x^{2n^2+2m^2+3}+y^{2m^3+n+3}+u^{n^2+m^2+2}v^{m^3+n+2}.
\end{align*}
We will find a finite generating set of the ideal $I_5=\langle g_{n,m}\mid n,m\in\Z_{\ge 0} \rangle\subset R$. 
Let 
\begin{align*}
J_5&=\langle f_{n,m} \mid 0\le n\le 3, 0\le m \le 2\rangle\\
&=\langle g_{0,0},g_{1,1},g_{2,2},g_{4,2},g_{4,8},g_{5,3},g_{5,9},
g_{8,10},g_{9,3},g_{10,4},g_{13,11}\rangle. 
\end{align*}
Figure \ref{fig:ex_nm0} shows the pairs $(n_1,n_2)$ that satisfy $g_{n_1,n_2} \in J_5$. 

By Theorem \ref{main}, it follows that 
$g_{2n_1+3,2n_2+3}, g_{2n_1+4,2n_2+4} \in J_5$
for all $n_1,n_2\in \Z_{\ge 0}$. 
Thus $g_{n_1,n_2} \in J_5$ for all $n_1,n_2\in \Z_{\ge 0}$ 
satisfying $n_1\equiv n_2 \bmod 2$ and $(n_1,n_2)\ge (3,3)$. 

Since $n^2 + m^2 \equiv m^3 + n \bmod 2$ and $(n^2 + m^2, m^3 + n) \ge (3, 3)$ for all $(n,m)\in \Z_{\ge 0}^2$ with the exceptions of $(n, m) = (0, 0), (0, 1), (1, 0), (1, 1), (2, 0)$, it follows that $f_{n,m} \in J_5$ for all $n, m \in \mathbb{Z}_{\ge 0}$. 
In conclusion, we have $I_5=J_5$. 
\begin{figure}[hbpt]
    \centering
\includegraphics[scale=0.6]{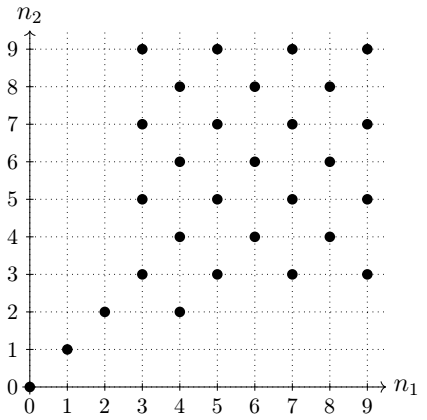}
    \caption{Plot of $(n_1,n_2)$ satisfying $g_{n_1,n_2}\in J_5$.}
    \label{fig:ex_nm}
\end{figure}
\end{Example}
\begin{Example}
\begin{align*}
g_{n_1,n_2} &=x^{n_1+1}y+z^{n_2+1}+w^{n_1+1},\\
f_{n}=g_{n^2,n} &=x^{n^2+1}y+z^{n+1}+w^{n^2+1}.
\end{align*}
Let $I_6=\langle f_{n}\mid n\in\Z_{\ge 0} \rangle\subset R$ and 
$J_6=\langle f_0,f_1,f_2,f_3,f_4\rangle$. 
Figure \ref{fig:ex_pa} shows the pairs $(n_1,n_2)$ that satisfy $g_{n_1,n_2} \in J_6$ and the curve $(t^2,t),~t\ge 0$. 
Since $g_{6m_1+7,2m_2+3}\in J_6$ for $(m_1,m_2) \le (2,2)$, we have $g_{6m_1+7,2m_2+3}\in J_6$ for all $(m_1,m_2) \in \Z_{\ge 0}^2$ by Theorem \ref{main}. 
Similarly, we have 
\begin{align*}
g_{6m_1+9,2m_2+3},~g_{6m_1+10,2m_2+4},~g_{6m_1+12,2m_2+4}\in J_6
\end{align*}
for all $(m_1,m_2) \in \Z_{\ge 0}^2$. 
This indicates that $g_{n_1,n_2} \in J_6$ for all $(n_1,n_2)\in\Z_{\ge 0}^2$ satisfying the following conditions 
\begin{align*}
(n_1,n_2)\ge(7,3),~~n_1\equiv n_2\bmod 2,~~n_1\not\equiv 2 \bmod 3.
\end{align*}
Since $n^2 \not\equiv 2 \bmod 3$, we have $f_n \in J_6$ for all $n\ge 3$. 
In conclusion, we have $I_6=J_6$. 
\begin{figure}[htbp]
    \centering
\includegraphics[scale=0.65]{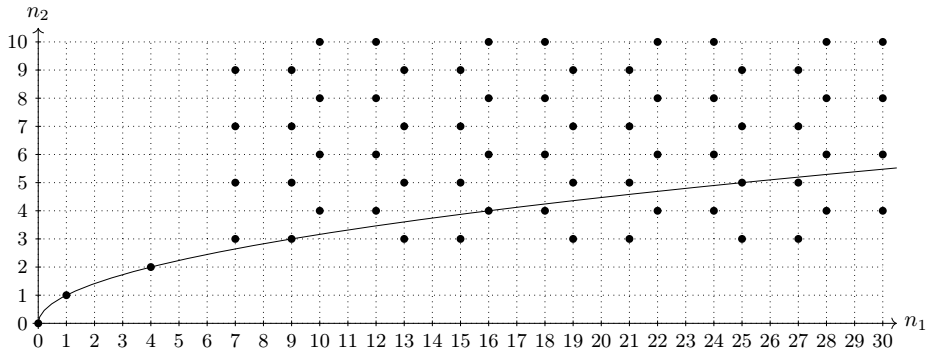}
    \caption{Plot of $(n_1,n_2)$ satisfying $g_{n_1,n_2}\in J_6$, and the curve $(t^2,t), t\ge 0$.}
    \label{fig:ex_pa}
\end{figure}
\end{Example}
\begin{bibdiv}
\begin{biblist}
\bib{Buchberger}{article}{
   author={Buchberger, Bruno},
   title={Ein algorithmisches Kriterium f\"ur die L\"osbarkeit eines
   algebraischen Gleichungssystems},
   language={German},
   journal={Aequationes Math.},
   volume={4},
   date={1970},
   pages={374--383},
   issn={0001-9054},
}
\bib{BE}{article}{
   author={Buchberger, Bruno},
   author={El\'ias, Juan},
   title={Using Gr\"obner bases for detecting polynomial identities: a case
   study on Fermat's ideal},
   journal={J. Number Theory},
   volume={41},
   date={1992},
   number={3},
   pages={272--279},
   issn={0022-314X},
}
\bib{CLO}{book}{
   author={Cox, David},
   author={Little, John},
   author={O'Shea, Donal},
   title={Ideals, varieties, and algorithms},
   series={Undergraduate Texts in Mathematics},
   edition={3},
   note={An introduction to computational algebraic geometry and commutative
   algebra},
   publisher={Springer, New York},
   date={2007},
   pages={xvi+551},
   isbn={978-0-387-35650-1},
   isbn={0-387-35650-9},
}
\bib{Hilbert}{article}{
   author={Hilbert, David},
   title={\"Ueber die Theorie der algebraischen Formen},
   language={German},
   journal={Math. Ann.},
   volume={36},
   date={1890},
   number={4},
   pages={473--534},
   issn={0025-5831},
}
\bib{Matiyasevich}{book}{
   author={Matiyasevich, Yuri V.},
   title={Hilbert's tenth problem},
   series={Foundations of Computing Series},
   note={Translated from the 1993 Russian original by the author;
   With a foreword by Martin Davis},
   publisher={MIT Press, Cambridge, MA},
   date={1993},
   pages={xxiv+264},
   isbn={0-262-13295-8},
}
\bib{Watanabe}{article}{
  title={Algebraic analysis for non-regular learning machines},
  author={Watanabe, Sumio},
  journal={Advances in neural information processing systems},
  volume={12},
  date={1999}
}
\end{biblist}
\end{bibdiv}
\end{document}